\title{Lax Liftings and Lax Distributive Laws}
\author{Ezra Schoen
\institute{University of Strathclyde\\Glasgow, Scotland}
}
\let\temp\phi
\let\phi\varphi
\let\varphi\temp
\let\temp\epsilon
\let\epsilon\varepsilon
\let\varepsilon\temp
\DeclareMathOperator{\Rel}{\mathbf{Rel}}
\DeclareMathOperator{\Sets}{\mathbf{Sets}}
\DeclareMathOperator{\N}{\mathcal{N}}
\DeclareMathOperator{\M}{\mathcal{M}}
\DeclareMathOperator{\Hom}{\mathbf{Hom}}
\DeclareMathOperator{\relto}{\multimap}
\DeclareMathOperator{\gr}{gr}
\DeclareMathOperator{\id}{id}
\DeclareMathOperator{\Lift}{\mathbf{Lift}}
\newcommand{\op}{\text{op}}
\newcommand{\conv}{\circ}
\newcommand{\sqto}{\rightsquigarrow}
\newcommand{\smalltilde}{\scalebox{0.7}{$\sim$}}
\newcommand{\twiddle}[1]{{#1}^{\text{\smalltilde}}}
\newcounter{mcount}
\theoremstyle{plain}
\newtheorem{lemma}[mcount]{Lemma}
\newtheorem{prop}[mcount]{Proposition}
\newtheorem{theorem}[mcount]{Theorem}
\newtheorem{corollary}[mcount]{Corollary}
\theoremstyle{definition}
\newtheorem{defi}[mcount]{Definition}
\theoremstyle{remark}
\newtheorem{remark}[mcount]{Remark}
\newtheorem{example}[mcount]{Example}
\newtheorem{conj}[mcount]{Conjecture}
\begin{document}

\maketitle

\begin{abstract}
Liftings of endofunctors on sets to endofunctors on relations are commonly used to capture bisimulation of coalgebras. Lax versions have been used in those cases where strict lifting fails to capture bisimilarity, as well as in modeling other notions of simulation. This paper provides tools for defining and manipulating lax liftings. 

As a central result, we define a notion of a lax distributive law of a functor over the powerset monad, and show that there is an isomorphism between the lattice of lax liftings and the lattice of lax distributive laws.

We also study two functors in detail: (i) we show that the lifting for monotone bisimilarity is the minimal lifting for the monotone neighbourhood functor, and (ii) we show that the lattice of liftings for the (ordinary) neighbourhood functor is isomorphic to P(4), the powerset of a 4-element set. 
\end{abstract}

\section{Introduction}

Coalgebras for an endofunctor are a general model of state-based transition systems. \cite{Jacobs2016} Bisimulations are a central concept in the study of coalgebras, describing behavioral equivalence of states. Going back to \cite{Rutten1998}, bisimulations of $F$-coalgebras in $\Sets$ have been defined as prefixed points of $\bar F$, the extension of $F$ to $\Rel$, the category of sets and relations. 

One issue is that $\Rel$ places high demands on extensions: if $\widetilde F:\Rel\to\Rel$ is to be a strict functor that preserves the ordering of relations, and coincides with $F$ on graphs of functions, then $\widetilde F$ only exists if $F$ preserves weak pullbacks\cite{Carboni1991}; and if $F$ preserves them, it is unique \cite{Bird1997} and equal to the Barr lifting $\bar F$. \cite{Barr1970} This situation is undesireable for two reasons:
\begin{itemize}
\item The elegant extension-based framework for bisimulation cannot be directly applied to coalgebras of type $F$ when $F$ does not preserve weak pullbacks. Neighbourhood-type functors are the most prominent example of such $F$.
\item While the lifting $\bar F$ can be used to reason about bisimulation, other notions of simulation or equivalence of coalgebras cannot be expressed in the same way, since there are no other strict extensions. 
\end{itemize}

To remedy this, various weaker notions of extension have been proposed.\cite{Thijs1996, Baltag2000, Hughes2004, Marti2015}. Finding explicit examples has proceeded in a mostly ad-hoc fashion. The aim of this paper is to provide tools to reason about lax lifting in a more principled way. This paper is based on chapter 3 of the author's MSc thesis \cite{Schoen2021}.

\vspace{10pt}

Our main contribution is a new notion of a \emph{lax distributive law}, which we will show are in one-to-one correspondence with lax liftings. Distributive laws at their most general are simply natural transformations $FG\Rightarrow GF$ for two functors $F,G$. In most cases however, at least one of the two functors $F$ and $G$ is taken to be a monad, and the distributive law is required to interact `nicely' with the monad structure. 

The connection between liftings and distributive laws originates in \cite{Beck1969}, which focused on monad-monad interactions. Mulry \cite{Mulry1994} proved the equivalence between distributive laws of a functor $F$ over a monad $T$ and liftings of $F$ to the Kleisli category of $T$. 

More recently, some notions of `weak distributive law' have been studied \cite{Street2009}; these, like Beck, pertain to monad-monad interaction, and involve weakening some of the conditions on Becks original distributive laws. Closer to the work in this paper are the lax distributive laws in \cite{Tholen2016}, though again these focus on monad-monad interactions. 

Aside from their connection to monads, distributive laws are of interest in their own right. They feature centrally in the bialgebraic approach to operational semantics. \cite{Turi1997, Klin2006} In the theory of automata, morphisms of distributive laws can provide various determinization procedures. \cite{Zetzsche2021}

\vspace{10pt}

We also analyse the liftings for two specific functors in detail:
\begin{itemize}
\item We prove that the minimal lifting for the monotone neighbourhood functor is given by the lifting $\widetilde \M$. This lifting has previously been used \cite{Santocanale2010}; our result shows that $\widetilde\M$ is in some sense universal for $\M$.
\item We give a complete description of the liftings for the ordinary neighbourhood functor. Equivalence notions between neighbourhood structures can be quite complex. \cite{Hansen2009} The classification in this paper shows that any notion of bisimulation between neighbourhood structures based on lax liftings will be almost trivial, since none of the 16 possible liftings make meaningful use of the input relation.
\end{itemize}

\emph{Outline}

\vspace{10pt}

In section 2, we show that for a fixed functor, the lax liftings form a complete lattice. This implies that any functor admits a minimal, ``maximally expressive" lifting. We show that for weak pullback-preserving functors, the minimal lifting coincides with the Barr lifting. 

In section 3, we define lax distributive laws, and show that there is an isomorphism between the lattice of lax liftings, and the lattice of lax distributive laws. We also characterize those distributive laws that correspond to liftings that are symmetric and 
diagonal-preserving. 

In section 4, we study the monotone and ordinary neighbourhood functors in more detail. For the monotone neighbourhood functor, we show that the known lifting $\widetilde \M$ is minimal. For the ordinary neighbourhood functor, we show that the lattice of liftings is isomorphic to $P(4)$ by giving an explicit description of all 16 liftings. 

\section{Preliminaries and basic properties}

\begin{defi}
We write $\Rel$ for the category of sets and relations. The objects of $\Rel$ are sets, and a morphism $R\in \Hom_{\Rel}(X,Y)$ is given by a subset $R\subseteq X\times Y$.

Given two relations $R:X\relto Y$ and $S:Y\relto Z$, we write $R;S:X\relto Z$ for their composition $R;S = \{(x,z)\in X\times Z\mid \exists y:xRySz\}$. Note that the order of composition is reversed from function composition. 

Given a relation $R:X\relto Y$, we write $R^\conv$ for its converse; that is,
\[
R^\conv = \{(y,x)\mid (x,y)\in R\}
\]

Given a function $f:X\to Y$, we write $\gr(f)$ for its \emph{graph}, which is the relation
\[
\gr(f) = \{(x,y)\mid f(x) = y\}
\]

The category $\Rel$ is enriched over posets, where relations are ordered by inclusion. This makes $\Rel$ into a 2-category (in fact, it is the canonical example of an allegory). The operation $(-)^\conv$ is the morphism part of a functor $(-)^\conv:\Rel\to \Rel^\op$, which is an isomorphism of 2-categories. We write $\gr^\conv:\Sets^\op\to \Rel$ for the composition $(-)^\conv\circ \gr$. 
\end{defi}

\begin{remark}
The category $\Rel$ is isomorphic to the Kleisli category for the powerset monad. The assignment $f\mapsto \gr(f)$ is the morphism part of the left adjoint $\gr$ in the free-forgetful adjunction $\gr\dashv P$ that arises out of the Kleisli category construction. For a given relation $R:X\relto Y$, we will write $\chi_R:X\to PY$ for the corresponding Kleisli morphism. Conversely, for a Kleisli morphism $f:X\to PY$, we will write $\lfloor f\rfloor:X\relto Y$ for the corresponding relation. 

The converse of a Kleisli morphism $f:X\to PY$ will be written as
\[
f^\flat:Y\to PX: y\mapsto \{x\mid f(x)\ni y\}
\]
\end{remark}

\begin{defi}
Let $F:\Sets\to\Sets$ be a functor. A \emph{(lax) $F$-lifting} is a lax 2-functor $L:\Rel\to\Rel$ such that 
\[
\begin{tikzcd}
\Rel\rar{L}\arrow[dr, "\geq" marking, phantom] & \Rel & \Rel\rar{L}\arrow[dr, "\geq" marking, phantom] & \Rel\\
\Sets\uar{\gr} \rar{F} & \Sets\uar{\gr}& \Sets^\op\uar{\gr^\conv}\rar{F^\op} & \Sets^\op\uar{\gr^\conv}
\end{tikzcd}
\] commute up to the indicated inequalities. A lifting $L$ is called \emph{symmetric} if 
\[
\begin{tikzcd}
\Rel^\op\rar{L^\op} & \Rel^\op\\
\Rel\uar{(-)^\conv}\rar{L} & \Rel\uar{(-)^\conv}
\end{tikzcd}
\]
commutes; it is called \emph{diagonal-preserving} if it strictly preserves identities. 

Explicitly, we can expand the above into the following 5 conditions:
\begin{description}
\item[1. (2-cells)] For all $R, S:X\relto Y$, if $R\leq S$, then $LR \leq LS$.
\item[2. (lax functoriality)] For all $R: X\relto Y$ and $S:Y\relto Z$, we have $LR;LS \leq L(R;S)$. 
\item[3. (lifting)] For all $f:X\to Y$, we have
\[
\gr(Ff)\leq L\gr(f), \qquad \gr^\conv(Ff)\leq (L\gr(f))^\conv
\]
\item[4. (diagonal-preserving)] For all $X$, we have $L\Delta_X \leq \Delta_{FX}$.
\item[5. (symmetry)] For all $R:X\relto Y$, we have
\[
L(R^\conv) = (LR)^\conv
\]
\end{description}
\end{defi}

\begin{remark}
The above includes various notions of lifting that have been previously been studied. Some authors (e.g. \cite{Kurz2016}) have taken ``lifting'' to be synonymous with the Barr lifting (see below). The notion of ``(weak) relator" in \cite{Baltag2000} and \cite{Thijs1996} strengthen condition 2 to strict functoriality (although \cite{Thijs1996} does not require monotonicity). The notion used in \cite{Marti2015} is almost identical, the only difference being that they require symmetry. 
\end{remark}

We give some examples:
\begin{example}
\begin{enumerate}[label = (\arabic{mcount}.\arabic*)]
\item For all functors $F:\Sets\to\Sets$, there is the lifting $F_\top:\Rel\to\Rel$ given by
\[
F_\top(R:X\relto Y) = FX\times FY
\]
This lifting is symmetric, but does not preserve diagonals unless $|FX| \leq 1$ for all $X$.
\item Any relation $R:X\relto Y$ is presented as a span $R = \gr^\conv(\pi_1^R);\gr(\pi_2^R)$ by the two projection functions $\pi_1^R:R\to X$ and $\pi_2^R:R\to Y$. This motivates the definition
\[
\bar FX = \gr^\conv(F\pi_1);\gr(F\pi_2)
\]
$\bar F$ is known as the \emph{Barr lifting}; it originates in \cite{Barr1970}. In general, $\bar F$ is not lax but oplax, meaning $LR;LS \geq L(R;S)$. However, if $F$ preserves weak pullbacks, then $\bar F$ is a strict functor which strictly preserves graphs and converse graphs.\cite{Kurz2016} Since the diagonal is the graph of the identity, $\bar F$ also preserves diagonals. 
\item \label{ex:monotonelift} The \emph{Neighborhood functor} is defined to be the functor $\N = P P$. The action on a morphism $f: X\to Y$ is given by
\[
(\N f)U = \{v\mid f^{-1}(v)\in U\}
\]
The \emph{Monotone neighborhood functor} is the subfunctor $\M$ of $\N$ defined by
\[
\M X = \{U\in \N X\mid u\in U\text{ and }u \subseteq u'\implies u'\in U\}
\]
One lifting for the monotone neighbood functor is given by
\begin{align*}
\widetilde\M(R:X\relto Y) = \{(U, V)\mid &\forall u\in U\exists v\in V:\forall y\in v\exists x\in u:xRy\\
\text{ and }&\forall v\in V\exists u\in U:\forall x\in u\exists y\in v: xRy\}
\end{align*}
This lifting originates in \cite{Santocanale2010} where it was used to prove uniform interpolation for monotone modal logic. A closely related notion of bisimulation appeared earlier in \cite{Hansen2004}.
\end{enumerate}
\end{example}

We also state a simple lemma on lax liftings:

\begin{lemma}\label{lemma:cospan}
Let $L$ be an $F$-lifting. For all relations $R:X\relto Y$ and all functions $f:X'\to X$ and $g:Y'\to Y$, we have
\[
L(\gr(f);R;\gr^\conv(g)) = \gr(Ff);LR;\gr^\conv(Fg)
\]
\end{lemma}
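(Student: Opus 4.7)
The plan is to prove the two inequalities separately. The forward inclusion $\gr(Ff);LR;\gr^\conv(Fg) \leq L(\gr(f);R;\gr^\conv(g))$ should follow directly from the lifting axioms together with lax functoriality: the two forms of the lifting condition give $\gr(Ff) \leq L\gr(f)$ and $\gr^\conv(Fg) \leq L\gr^\conv(g)$, and combining these by monotonicity and lax functoriality yields the desired bound in a single step.

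For the reverse inclusion, the key structural fact is the adjunction $\gr(h) \dashv \gr^\conv(h)$ that holds for every function $h:A\to B$ in $\Rel$, with unit $\Delta_A \leq \gr(h);\gr^\conv(h)$ and counit $\gr^\conv(h);\gr(h) \leq \Delta_B$. I will apply the units at $Ff$ and $Fg$ to sandwich $L(\gr(f);R;\gr^\conv(g))$ on the left and right, producing the upper bound $\gr(Ff);\gr^\conv(Ff);L(\gr(f);R;\gr^\conv(g));\gr(Fg);\gr^\conv(Fg)$. I will then replace $\gr^\conv(Ff)$ by $L\gr^\conv(f)$ and $\gr(Fg)$ by $L\gr(g)$ via the lifting axioms, collapse the three consecutive $L$-images into the single $L$-image $L(\gr^\conv(f);\gr(f);R;\gr^\conv(g);\gr(g))$ by lax functoriality, and finally invoke the counit inequalities $\gr^\conv(f);\gr(f) \leq \Delta_X$ and $\gr^\conv(g);\gr(g) \leq \Delta_Y$ together with monotonicity to reduce the inner relation back to $R$, yielding exactly $\gr(Ff);LR;\gr^\conv(Fg)$.

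The main subtlety I anticipate is keeping track of which variant of the lifting axiom is needed where: the forward direction uses the covariant form applied to $f$ and the contravariant form applied to $g$, while the reverse direction uses the contravariant form applied to $f$ and the covariant form applied to $g$. Beyond this bookkeeping the calculation is a short chain of $\leq$-steps, and it is worth noting that neither the diagonal-preserving axiom nor symmetry enters the argument; only monotonicity, lax functoriality, and the two lifting inequalities are used.
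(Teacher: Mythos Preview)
Your argument is correct: the forward inclusion follows immediately from the two lifting inequalities together with lax functoriality, and the reverse inclusion is obtained exactly as you describe, by inserting the units $\Delta_{FX'}\leq\gr(Ff);\gr^\conv(Ff)$ and $\Delta_{FY'}\leq\gr(Fg);\gr^\conv(Fg)$, replacing the inner $\gr^\conv(Ff)$ and $\gr(Fg)$ via the lifting axioms, collapsing with lax functoriality, and then using the counits $\gr^\conv(f);\gr(f)\leq\Delta_X$ and $\gr^\conv(g);\gr(g)\leq\Delta_Y$ with monotonicity. Only conditions 1--3 are used, as you note.

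There is no in-paper proof to compare against: the paper simply cites lemma~3.10(iii) of \cite{Schoen2021} and moves on. Your proof is the standard one for this statement and is essentially what one finds in that reference.
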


This is lemma 3.10(iii) in \cite{Schoen2021}.

\vspace{10pt}

For a given functor $F:\Sets\to\Sets$, write $\Lift(F) = \{L:\Rel\to\Rel\mid L\text{ is an $F$-lifting}\}$. Liftings are naturally ordered pointwise: we say $L\leq L'$ if and only if for all $R$, we have $LR \leq L'R$. 

\begin{theorem}\label{thm:meets}
Fix a functor $F:\Sets\to\Sets$. The class $\Lift(F)$ forms a complete lattice, with meets given by
\[
\left(\bigwedge_{i\in I} L_i\right)R := \bigcap_{i\in I}(L_iR)
\]
\end{theorem}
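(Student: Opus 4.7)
The plan is to verify directly that the pointwise intersection of any family of $F$-liftings is itself an $F$-lifting. Combined with the fact that $F_\top$ from Example~2.1 is a lifting bounded above by no other (and is recovered as the empty intersection), this shows that $\Lift(F)$ has all meets; all joins then follow from the standard formula $\bigvee S = \bigwedge\{L \mid L \geq L'\text{ for all }L'\in S\}$. So the only real work is to show that $L := \bigwedge_i L_i$, defined pointwise by $LR = \bigcap_i L_i R$, satisfies the defining axioms of a lifting.

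Since each $L_i$ agrees with $F$ on objects, $L_i R \subseteq FX \times FY$ for $R : X \relto Y$, so $LR$ has the correct type. The monotonicity (2-cell) condition and the two lifting conditions are immediate from the corresponding property of each $L_i$, since intersection preserves inclusions: $R \leq S$ gives $L_i R \leq L_i S$ for all $i$ and hence $LR \leq LS$; and $\gr(Ff) \leq L_i \gr(f)$ for all $i$ yields $\gr(Ff) \leq L\gr(f)$, and similarly for the converse-graph condition.

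The one step requiring slightly more care is lax functoriality. For $R : X \relto Y$ and $S : Y \relto Z$, fix an arbitrary $j \in I$; then $LR \leq L_j R$ and $LS \leq L_j S$, so by monotonicity of composition in $\Rel$ together with lax functoriality of $L_j$,
\[
LR;LS \;\leq\; L_j R;L_j S \;\leq\; L_j(R;S).
\]
As this holds for every $j$, intersecting on the right yields $LR;LS \leq L(R;S)$. I do not expect a genuine obstacle: each defining axiom is a universally quantified inclusion that is stable under intersection, and the only non-trivial step leverages monotonicity of relational composition, which is part of the poset-enrichment of $\Rel$ already noted in the preliminaries.
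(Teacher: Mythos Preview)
Your proposal is correct and follows essentially the same route as the paper: verify directly that the pointwise intersection satisfies the three defining axioms of a lax lifting, with lax functoriality being the only step needing any care. Your argument for lax functoriality (fix $j$, bound by $L_jR;L_jS$, then intersect) is a marginally cleaner phrasing of the paper's double-intersection chain, and your explicit remark about $F_\top$ supplying the top element (the empty meet) is a small addition the paper leaves implicit.
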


\begin{proof}
See appendix.
\end{proof}

Since complete lattices have a minimal element, we get the following corollary:

\begin{corollary}
Every endofunctor on $\Sets$ admits a minimal lifting.
\end{corollary}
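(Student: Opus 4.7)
The plan is to deduce the corollary directly from Theorem~\ref{thm:meets}, using the fact that every complete lattice has a least element given by the meet of all its elements. Almost no work remains once Theorem~\ref{thm:meets} is in hand; the proof amounts to pointing at the bottom of the lattice and checking that the construction is not vacuous.

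Concretely, the plan is as follows. First, define
\[
L_{\min} \;:=\; \bigwedge_{L \in \Lift(F)} L,
\]
which by Theorem~\ref{thm:meets} is again an element of $\Lift(F)$, with action $L_{\min}(R) = \bigcap_{L \in \Lift(F)} L(R)$ on every relation $R : X \relto Y$. Second, observe that by the defining property of intersections we have $L_{\min} \leq L$ for every $L \in \Lift(F)$, so $L_{\min}$ is the minimum of the lattice and hence the minimal $F$-lifting.

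The only point that deserves comment is that the meet is nonvacuous, i.e.\ that $\Lift(F)$ is nonempty for every endofunctor $F$. This is witnessed by the trivial lifting $F_\top(R : X \relto Y) = FX \times FY$ introduced in the preceding examples, which is easily checked to satisfy the five lifting conditions for any $F$. (Note that completeness alone formally implies nonemptiness via the existence of a top element, but exhibiting $F_\top$ makes the situation explicit.)

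There is no substantive obstacle in the corollary itself: all the real work is carried by Theorem~\ref{thm:meets}, whose task is to verify that the pointwise intersection of a family of $F$-liftings remains monotone, lax functorial, above all graphs and converse graphs, and (if desired) compatible with symmetry or diagonal-preservation. Once that closure property is established, the existence of a minimum is purely order-theoretic and requires no additional argument.
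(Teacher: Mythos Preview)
Your proposal is correct and matches the paper's approach exactly: the paper simply observes that since complete lattices have a minimal element, the corollary follows immediately from Theorem~\ref{thm:meets}. Your explicit identification of $L_{\min}$ as the meet over all of $\Lift(F)$ and your remark about nonemptiness via $F_\top$ just spell out what the paper leaves implicit.
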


The significance of this corollary is the following: each lifting gives rise to a corresponding notion of simulation of coalgebras, as well as a modal logic. If for two liftings $L, L'$ we have $L\leq L'$, then $L$-simulation distinguishes more states than $L'$-simulation, and $L$-logic is more expressive than $L'$-logic. A minimal lifting hence induces a maximally discerning notion of (bi)simulation, and a maximally expressive logic (among those that arise from lax liftings). \cite{Schoen2021}

\vspace{10pt}

In case $F$ is weak pullback-preserving, we have an explicit description of its minimal lifting.
\begin{prop}
Let $F:\Sets\to\Sets$ be weak pullback-preserving. Then $\bar F$ is minimal among the $F$-liftings.
\end{prop}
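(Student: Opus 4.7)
The plan is as follows. The statement has two parts: first, that $\bar F$ belongs to $\Lift(F)$ at all, and second, that it sits below every other $L \in \Lift(F)$. The first part is immediate from the discussion in Example 2.4.2, since under weak-pullback preservation $\bar F$ is a strict 2-functor that strictly preserves graphs, converse graphs, and hence diagonals; in particular it is a lax $F$-lifting.

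For minimality, the idea is to fix an arbitrary $L \in \Lift(F)$ and an arbitrary relation $R : X \relto Y$, and show $\bar F R \leq LR$ by exploiting the span factorization. The projections $\pi_1^R : R \to X$ and $\pi_2^R : R \to Y$ of $R$ viewed as a subset of $X \times Y$ satisfy $R = \gr^\conv(\pi_1^R); \gr(\pi_2^R)$, and by the very definition of the Barr lifting $\bar F R = \gr^\conv(F\pi_1^R); \gr(F\pi_2^R)$.

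The lifting conditions of $L$ applied to these projections give
\[
\gr^\conv(F\pi_1^R) \;\leq\; L\gr^\conv(\pi_1^R), \qquad \gr(F\pi_2^R) \;\leq\; L\gr(\pi_2^R),
\]
and combining them via monotonicity of relational composition and then lax functoriality of $L$ yields
\[
\bar F R \;=\; \gr^\conv(F\pi_1^R); \gr(F\pi_2^R) \;\leq\; L\gr^\conv(\pi_1^R); L\gr(\pi_2^R) \;\leq\; L\bigl(\gr^\conv(\pi_1^R); \gr(\pi_2^R)\bigr) \;=\; LR,
\]
which is the desired inequality. Since $R$ was arbitrary, $\bar F \leq L$ in $\Lift(F)$.

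The calculation itself is routine once the span factorization is in view; if there is any real obstacle it is just being careful to invoke both halves of the lifting condition simultaneously, one for the graph $\gr(\pi_2^R)$ and one for the converse graph $\gr^\conv(\pi_1^R)$, corresponding to the two squares in the definition of a lax $F$-lifting. No use of symmetry or diagonal-preservation of $L$ is needed; the weak-pullback hypothesis is used only to guarantee $\bar F \in \Lift(F)$.
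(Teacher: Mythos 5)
Your proof is correct and is essentially identical to the paper's: both use the span factorization $R = \gr^\conv(\pi_1^R);\gr(\pi_2^R)$, the two halves of the lifting condition applied to the projections, and lax functoriality to conclude $\bar FR \leq LR$. The only addition is your explicit remark that $\bar F$ is itself a lifting under weak-pullback preservation, which the paper leaves to the preceding discussion of the Barr lifting.
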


\begin{proof}
Let $L$ be a lifting for $F$. Then let $R:X\relto Y$ be a relation. We know that $R$ is presented as a span $R = \gr^\circ(\pi^R_X);\gr(\pi^R_Y)$ with $\pi^R_X:R\to X$ and $\pi^R_Y:R\to Y$ being the projection functions. So,
\[
LR = L(\gr^\circ(\pi^R_X);\gr(\pi^R_Y))\geq L(\gr^\circ(\pi^R_X));L(\gr(\pi^R_Y))\geq \gr^\circ(F\pi^R_X);\gr(F\pi^R_Y) = \bar FR
\]
\end{proof}

There is also a natural involution on liftings, induced by $(-)^\conv$:
\begin{defi}
For an $F$-lifting $L$, we define the lifting $\twiddle L$ as
\[
\twiddle L(R) := (L(R^\conv))^\conv
\]
\end{defi}

It is simple to prove that $\twiddle L$ is a lifting when $L$ is. \cite{Schoen2021}

Natural transformations between functors also induce a map between the associated liftings:
\begin{theorem}\label{thm:transform}
Let $F,G:\Sets\to\Sets$ be functors, and let $\eta:F\Rightarrow G$ be a natural transformation.
\begin{enumerate}[label = (\roman*)]
\item For every $G$-lifting $L$, the assignment
\[
R\mapsto \{(x,y)\in FX\times FY \mid (\eta(x),\eta(y))\in LR\}
\]
constitutes an $F$-lifting $\eta^*L$.
\item $\eta^*$ preserves arbitrary meets and $\twiddle{(-)}$. 
\item If $L$ is symmetric, so is $\eta^*L$.
\item If $\eta$ is everywhere injective, then if $L$ preserves diagonals, so does $\eta^*L$. 
\end{enumerate}
\end{theorem}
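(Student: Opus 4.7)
The plan is to verify each part by unpacking the definition of $\eta^*L$ as the preimage of $L$ under the componentwise action of $\eta$, and using the assumed properties of $L$ together with naturality of $\eta$ where needed.

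For part (i), I would check the five conditions in order. Monotonicity (condition 1) is immediate from monotonicity of $L$, since preimages respect inclusions. For lax functoriality (condition 2), take $(x,y) \in \eta^*L(R)$ and $(y,z) \in \eta^*L(S)$; by definition $(\eta x, \eta y) \in LR$ and $(\eta y, \eta z) \in LS$, so $(\eta x, \eta z) \in LR; LS \leq L(R;S)$ by lax functoriality of $L$, giving $(x,z) \in \eta^*L(R;S)$. The lifting conditions (condition 3) are where naturality of $\eta$ enters: for $f:X\to Y$ and $x \in FX$ we need $(\eta_X x, \eta_Y(Ff(x))) \in L\gr(f)$, and naturality rewrites $\eta_Y \circ Ff = Gf \circ \eta_X$, so this pair lies in $\gr(Gf) \leq L\gr(f)$. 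The converse lifting inequality follows by the same naturality argument applied to $\gr^\conv$.

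For part (ii), preservation of meets is a triviality: $(\bigcap_i L_i)$ applied to $R$ is an intersection, and taking the preimage under $(\eta_X,\eta_Y)$ commutes with intersection. For the twiddle operation, I would write out both sides: $\eta^*(\twiddle L)(R) = \{(x,y) \mid (\eta y, \eta x) \in L(R^\conv)\}$ and $\twiddle{(\eta^*L)}(R) = \{(x,y) \mid (y,x) \in \eta^*L(R^\conv)\} = \{(x,y) \mid (\eta y, \eta x) \in L(R^\conv)\}$, which coincide. Part (iii) reduces to the same kind of bookkeeping: assuming $L(R^\conv) = (LR)^\conv$, the membership condition $(\eta x, \eta y) \in L(R^\conv)$ becomes $(\eta y, \eta x) \in LR$, which is exactly the defining condition for $(y,x) \in \eta^*L(R)$, i.e., $(x,y) \in (\eta^*L R)^\conv$.

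Part (iv) is the only place the hypothesis on $\eta$ matters. Given $L\Delta_X \leq \Delta_{GX}$ and $(x,y) \in \eta^*L\Delta_X$, we have $(\eta_X x, \eta_X y) \in \Delta_{GX}$, so $\eta_X x = \eta_X y$; injectivity of $\eta_X$ then forces $x = y$, so $(x,y) \in \Delta_{FX}$. I do not foresee any real obstacle: every step is either a definition chase or a direct application of naturality. The one place to be careful is in the lifting condition of (i), where one must remember that there are two inequalities (for $\gr$ and $\gr^\conv$) and that both use the naturality square of $\eta$; this is the step most worth writing out carefully.
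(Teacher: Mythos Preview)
Your proposal is correct and follows essentially the same approach as the paper: direct verification of each condition using monotonicity and lax functoriality of $L$, with naturality of $\eta$ supplying the lifting inequalities. Two cosmetic differences worth noting: the paper phrases the lifting step relationally (observing that $\eta^*L(R) = \gr(\eta);LR;\gr^\conv(\eta)$ and deducing the inequality from the naturality square $\gr(Ff);\gr(\eta) = \gr(\eta);\gr(Gf)$) rather than elementwise, and it obtains (iii) as an immediate corollary of the preservation of $\twiddle{(-)}$ from (ii) rather than by a separate element chase; also, in (i) only conditions 1--3 need checking, not five, since diagonal-preservation and symmetry are the optional properties handled in (iii)--(iv).
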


Note that joins are not preserved in general: in particular, the minimal lifting is rarely preserved by $\eta^*$.

\begin{proof}
See appendix.
\end{proof}

From point (iv), together with the fact that the Barr lifting always preserves diagonals, we immediately get the following result:

\begin{corollary}
All subfunctors of a weak pullback-preserving functor admit a diagonal-preserving lifting. 
\end{corollary}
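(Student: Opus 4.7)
The plan is to invoke Theorem \ref{thm:transform}(iv) directly, using the Barr lifting of the ambient functor. Concretely, let $G:\Sets\to\Sets$ be weak pullback-preserving and let $F$ be a subfunctor of $G$, witnessed by a natural transformation $\eta:F\Rightarrow G$ whose components $\eta_X:FX\to GX$ are all injective.

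First I would recall that since $G$ preserves weak pullbacks, the Barr lifting $\bar G$ is a strict $G$-lifting that strictly preserves graphs (example~(1.2) in the excerpt). In particular, since $\Delta_X = \gr(\id_X)$ and $G\id_X = \id_{GX}$, we have $\bar G\Delta_X = \Delta_{GX}$, so $\bar G$ preserves diagonals.

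Then I would apply Theorem \ref{thm:transform} to $\eta:F\Rightarrow G$ and $L = \bar G$. Part (i) gives that $\eta^*\bar G$ is an $F$-lifting, defined on $R:X\relto Y$ by
\[
\eta^*\bar G(R) = \{(x,y)\in FX\times FY \mid (\eta_X(x),\eta_Y(y))\in \bar GR\}.
\]
Since $\eta$ is everywhere injective and $\bar G$ preserves diagonals, part (iv) immediately yields that $\eta^*\bar G$ preserves diagonals as well. This exhibits the desired diagonal-preserving $F$-lifting and completes the proof.

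There is no real obstacle: the corollary is essentially a repackaging of the two pieces of machinery already in hand, and the only thing one needs to note is that ``subfunctor'' is precisely the hypothesis required to invoke the injectivity condition of Theorem \ref{thm:transform}(iv).
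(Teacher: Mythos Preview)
Your proposal is correct and follows exactly the paper's approach: apply Theorem~\ref{thm:transform}(iv) to the inclusion $\eta:F\Rightarrow G$ with $L=\bar G$, using that the Barr lifting of a weak pullback-preserving functor preserves diagonals. The paper's proof is just this one-line observation.
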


This motivates the following conjecture:

\begin{conj}\label{conj:1}
	The converse of the above: if $F$ has a diagonal-preserving lifting, it can be embedded in a weak pullback-preserving functor. 
\end{conj}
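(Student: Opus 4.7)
The plan is to construct, from a functor $F$ equipped with a diagonal-preserving lax lifting $L$, a weak-pullback-preserving functor $G$ together with a natural injection $\eta:F\hookrightarrow G$. By the correspondence to be developed in Section 3, the lifting $L$ is equivalent to a lax distributive law $\rho:FP\Rightarrow PF$ satisfying an additional compatibility condition reflecting preservation of diagonals. Since WPP functors are precisely those whose Barr lifting is a strict functor, and these correspond to \emph{strict} distributive laws over $P$, the natural target is a functor $G\supseteq F$ carrying a strict distributive law that extends $\rho$ along $\eta$.

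Concretely, I would proceed as follows. First, identify a canonical ``saturation'' $GX$ built from formal composites involving $FX$ and iterated applications of the components of $\rho$, intuitively adjoining the elements one needs in order to turn the lax-commuting squares of $\rho$ into strictly commuting ones. Second, verify functoriality of $G$ using naturality of $\rho$, and exhibit $\eta:F\Rightarrow G$ as the evident inclusion of formal generators. Third, check that the extended distributive law on $G$ is strict, and hence that $\bar G$ is a strict $2$-functor, implying that $G$ preserves weak pullbacks. Finally, verify injectivity of $\eta$; then by Theorem \ref{thm:transform}(iv), $\eta^*\bar G$ is a diagonal-preserving $F$-lifting, providing an internal consistency check (though there is no reason to expect $L = \eta^*\bar G$ in general).

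The main obstacle will be the construction of $G$ in step one: there is no general machinery guaranteeing that a lax distributive law can be \emph{strictified} by functorially enlarging its domain while preserving the diagonal compatibility. The ``formal composite'' approach sketched above must be cut down by a well-chosen equivalence relation, and it is delicate to ensure simultaneously that (a) $\eta$ remains injective, (b) the relation is coarse enough that the extended distributive law becomes strict, and (c) the resulting $G$ is well-defined as a set-valued functor rather than a proper class. A separate concern is whether the conjecture is correct at all: a counterexample would be a functor admitting a diagonal-preserving lifting but no monic natural transformation into any WPP functor. Good test cases are the $16$ liftings for $\mathcal{N}$ classified in Section~4 and filter-type functors, since these live near the boundary of what is expressible by lax liftings; checking the conjecture against them seems a reasonable preliminary to a full attack.
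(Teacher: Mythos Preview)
The statement you are addressing is labelled \emph{Conjecture} in the paper, and the paper gives no proof of it; it is recorded as an open problem immediately after the corollary about subfunctors of weak-pullback-preserving functors. There is therefore nothing to compare your proposal against.

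As a proof, what you have written is not one: it is a research plan, and you say so yourself. The central step~--- enlarging $F$ to some $G$ by ``strictifying'' the lax distributive law $\rho$ via formal composites modulo a suitable equivalence~--- is exactly the hard part, and you correctly identify the three obstructions (injectivity of $\eta$, coarseness sufficient for strictness, and size). None of these is addressed, so the argument has a genuine gap at its core. One concrete worry about the approach: strictification constructions of this flavour typically produce a $G$ together with a map $F\to G$, but there is no evident mechanism forcing that map to be \emph{monic}; the diagonal-preservation hypothesis on $L$ (equivalently, lax extensionality of $\rho$) must be used essentially for this, and your sketch does not indicate where it enters. A smaller point on your proposed test cases: the classification in Section~4 shows that none of the sixteen $\mathcal{N}$-liftings is diagonal-preserving (each $L_J$ ignores the input relation, so $L_J\Delta_X$ is far larger than $\Delta_{\mathcal{N}X}$ once $|X|\geq 2$), hence $\mathcal{N}$ is simply outside the hypothesis of the conjecture and cannot serve as a counterexample or confirmation.
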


\section{Lax distributive laws}

In this section, we give an alternative characterization of relation lifting in terms of distributive laws. We will write $\mu:P^2\to P$ and $\eta:\id\to P$ for respectively the multiplication and unit of the powerset monad.

\vspace{10pt}

\begin{defi}
Let $F:\Sets\to\Sets$ be any functor. A \emph{lax distributive law for $F$} is a collection of maps $\lambda_-:FP(-)\to PF(-)$, satisfying:
\begin{description}
\item[(Monotonicity)] For any two functions $f,g:X\to PY$, if $f\leq g$, then
\[
\lambda_Y\circ Ff\leq \lambda_Y\circ Fg
\]
\item[(Lax naturality)] For any function $f:X\to PY$, we have
\[
PFf\circ\lambda_X\leq \lambda_{PY} \circ FPf
\]
\item[(Lax Eilenberg-Moore)] For any $Z$, we have
\[
\mu_{FZ}\circ P\lambda_Z\circ \lambda_{PZ} \leq \lambda_Z\circ F\mu_Z \text{ and }\lambda_Z\circ F\eta_Z \geq \eta_{FZ}
\]
\end{description}
There are also the optional properties
\begin{description}
\item[(Lax extensionality)] For any $Z$,
\[
\lambda_Z\circ F\eta_Z\leq \eta_{FZ}
\]
\item[(Symmetry)] For any map $f:X\to PY$, 
\[
(\lambda_Y\circ Ff)^\flat = \lambda_X\circ F(f^\flat)
\]
\end{description}
\end{defi}

\begin{defi}
Let $\lambda:FP\sqto PF$ be a lax distributive law. For a given relation $R:X\relto Y$, we define $L^\lambda R$ as $
L^\lambda R := \lfloor \lambda_Y\circ F\chi_R\rfloor$.

Conversely, for a lax lifting $L$ of $F$, we define $\lambda^L:FP\sqto PF$ as
$
\lambda^L := \chi_{L\ni}
$.
\end{defi}

The main theorem of this section states that these operations describe a bijective correspondence between lax liftings and lax distributive laws.

\begin{theorem}
Let $F:\Sets\to\Sets$ be a functor.
\begin{enumerate}[label = (\roman*)]
\item The operations $L\mapsto \lambda^{L}$ and $\lambda\mapsto L^\lambda$ are inverse to each other.
\item If $L$ is a $F$-lifting, then $\lambda^L$ is a lax distributive law. Moreover, if $L$ preserves diagonals then $\lambda^L$ is laxly extensional, and if $L$ is symmetric, then $\lambda^L$ is symmetric.
\item If $\lambda$ is a lax distributive law, then $L^\lambda$ is a $F$-lifting. Moreover, if $\lambda$ is laxly extensional, then $L^\lambda$ preserves diagonals, and if $\lambda$ is symmetric, then $L^\lambda$ is symmetric.
\end{enumerate}
\end{theorem}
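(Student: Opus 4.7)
The plan centers on the canonical factorization $R = \gr(\chi_R); \ni_Y$ of any relation through the membership relation $\ni_Y: PY \relto Y$. Combined with Lemma \ref{lemma:cospan} (applied to the outer functions $\chi_R$ and $\id_Y$, with interior relation $\ni_Y$), any $F$-lifting $L$ satisfies the key identity
\begin{equation*}
LR = \gr(F\chi_R); L\ni_Y, \qquad \text{equivalently} \qquad \chi_{LR} = \chi_{L\ni_Y} \circ F\chi_R.
\end{equation*}
So a lifting is entirely determined by its action on the membership relations, which is exactly the data of a distributive law. Part (i) then reduces to a short calculation: $L^{\lambda^L}(R) = \lfloor \lambda^L_Y \circ F\chi_R \rfloor = \lfloor \chi_{L\ni_Y} \circ F\chi_R \rfloor = LR$ by the displayed identity, and for the other round-trip, $\chi_{\ni_Y} = \id_{PY}$ gives $L^\lambda(\ni_Y) = \lfloor \lambda_Y \rfloor$ and hence $\lambda^{L^\lambda}_Y = \chi_{L^\lambda \ni_Y} = \lambda_Y$.

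For parts (ii) and (iii), each axiom is translated across via the Kleisli composition formula $\chi_{R;S} = \mu_Z \circ P\chi_S \circ \chi_R$ and the identity $\chi_{\gr(f)} = \eta_Y \circ f$. Monotonicity on both sides is directly equivalent. The graph lifting $\gr(Ff) \leq L\gr(f)$ becomes $\eta_{FY} \circ Ff \leq \lambda_Y \circ F\eta_Y \circ Ff$, which at $f = \id$ is exactly the lax EM unit inequality (and the general case follows by pointwise composition). Diagonal preservation translates analogously: from $\Delta_X = \gr(\eta_X); \ni_X$ and Lemma \ref{lemma:cospan} we obtain $L\Delta_X = \gr(F\eta_X); L\ni_X$, so $L\Delta_X \leq \Delta_{FX}$ reads exactly as lax extensionality $\lambda_X \circ F\eta_X \leq \eta_{FX}$. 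Symmetry translates directly using the identity $\chi_{R^\conv} = \chi_R^\flat$. The cograph lifting condition $\gr^\conv(Ff) \leq L\gr^\conv(f)$ is not independent: Lemma \ref{lemma:cospan} applied to $\gr^\conv(f) = \gr(\id_Y); \Delta_Y; \gr^\conv(f)$ yields $L\gr^\conv(f) = L\Delta_Y; \gr^\conv(Ff) \geq \gr^\conv(Ff)$, since $L\Delta_Y \geq \Delta_{FY}$ follows from graph lifting.

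The main obstacle is the correspondence between lax functoriality of $L$ and the conjunction of lax naturality and lax EM multiplication for $\lambda$. Expanding $LR;LS \leq L(R;S)$ via the Kleisli composition formula produces the inequality
\begin{equation*}
\mu_{FZ} \circ P\lambda_Z \circ PF\chi_S \circ \lambda_Y \circ F\chi_R \;\leq\; \lambda_Z \circ F\mu_Z \circ FP\chi_S \circ F\chi_R.
\end{equation*}
In the forward direction ($\lambda \to L^\lambda$), lax naturality applied to $\chi_S$ commutes $\lambda_Y$ past $PF\chi_S$ to produce $\lambda_{PZ} \circ FP\chi_S$, and then lax EM multiplication absorbs the remaining $\mu_{FZ} \circ P\lambda_Z \circ \lambda_{PZ}$ into $\lambda_Z \circ F\mu_Z$. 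For the reverse direction ($L \to \lambda^L$), lax EM multiplication is extracted by specialising $R = \ni_{PZ}$ and $S = \ni_Z$ so that $\chi_R, \chi_S$ become identities and $\chi_{R;S} = \mu_Z$; lax naturality is extracted by combining lax functoriality with graph lifting via the reformulation $\ni_X; \gr(f) = \gr(Pf); \ni_Y$ and Lemma \ref{lemma:cospan}, which reduces both sides to $PFf \circ \lambda_X$ and $\lambda_Y \circ FPf$ after the $\chi$-translation. This is the technical heart of the argument, requiring careful bookkeeping but no individually deep steps.
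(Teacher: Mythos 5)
Most of your proposal tracks the paper's own argument closely: the factorization $R=\gr(\chi_R);{\ni_Y}$ together with Lemma \ref{lemma:cospan}, the round-trip calculation for (i), the dictionary monotonicity $\leftrightarrow$ 2-cells, lax EM unit $\leftrightarrow$ graph lifting, lax extensionality $\leftrightarrow$ diagonal preservation, and the decomposition of lax functoriality into lax naturality plus lax EM multiplication are all exactly what the paper does. However, there is one genuine gap: your treatment of the converse-graph condition $\gr^\conv(Ff)\leq L^\lambda\gr^\conv(f)$ in part (iii). You claim it is ``not independent'' and derive it by applying Lemma \ref{lemma:cospan} to $\gr^\conv(f)=\gr(\id_Y);\Delta_Y;\gr^\conv(f)$. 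This is circular: the inequality you need, $L(\Delta_Y;\gr^\conv(f))\geq L\Delta_Y;\gr^\conv(Ff)$, is precisely the half of the cospan lemma whose proof requires $\gr^\conv(Ff)\leq L\gr^\conv(f)$ (via lax functoriality applied to $L\Delta_Y;L\gr^\conv(f)$). At the point in (iii) where you invoke it, $L^\lambda$ has not yet been shown to be a lifting, so the lemma is not available.

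Moreover, the underlying independence claim is false, so no patch along these lines can work. Take $F$ to be the constant functor at $2=\{0,1\}$ and set $LR={\leq_2}=\{(0,0),(0,1),(1,1)\}$ if $R$ is total and $LR=\{(0,1)\}$ otherwise. This $L$ is monotone, laxly functorial, and satisfies $\gr(Ff)\leq L\gr(f)$ (graphs are always total), yet for non-surjective $f$ the relation $\gr^\conv(f)$ is not total and $L\gr^\conv(f)=\{(0,1)\}\not\supseteq\Delta_2=\gr^\conv(Ff)$. So the converse-graph condition must be extracted from the distributive-law axioms directly. The paper does this by showing $\gr(Ff);L^\lambda\gr^\conv(f)\geq\Delta_{FX}$, using the \emph{monotonicity} axiom of $\lambda$ applied to $\chi_{\gr^\conv(f)}\circ f\geq\eta_X$ together with the lax EM unit, and then concluding via the adjoint characterization of $\gr^\conv(Ff)$ as the least $R$ with $\gr(Ff);R\geq\Delta_{FX}$. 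Your dictionary never uses the monotonicity of $\lambda$ for this axiom, which is the missing ingredient.
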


\begin{proof}
\begin{enumerate}[label = (\roman*)]
\item We calculate
\begin{align*}
\lfloor\lambda^{L^\lambda}_Z\rfloor &= L^\lambda(\ni_Z)= \lfloor\lambda_Z\circ F\chi_{\ni}\rfloor = \lfloor \lambda_Z\circ F\id_{PZ}\rfloor =  \lfloor\lambda_Z\rfloor
\end{align*}
showing $\lambda^{L^\lambda} = \lambda$.

For the other equality, we get
\begin{align*}
L^{\lambda^L}(R) &= \lfloor\lambda^L\circ F\chi_R\rfloor\\
&= \lfloor \chi_{L\ni}\circ F\chi_R\rfloor\\
&\overset{*}{=} \lfloor \mu\circ P\chi_{L\ni}\circ \eta\circ F\chi_R\rfloor  \\
&\overset{**}{=} \lfloor \eta\circ F\chi_R\rfloor;\lfloor \chi_{L{\ni}}\rfloor  \\
&= \gr(F\chi_R);L\ni\\
&= L(\gr(\chi_R);\ni)& \text{ by lemma }\ref{lemma:cospan}\\
&\overset{***}{=} LR
\end{align*}
where in (*), we use one of the unit laws for monads, in (**) we use that $\lfloor -\rfloor$ turns Kleisli composition into relational composition, and in (***) we use the (easily verified) identity $\gr(\chi_R);{\ni} = R$. 
\item We check the conditions in order.
\begin{description}
\item[(Monotonicity)] We see that
\[
\lfloor \lambda^L_Y\circ Ff\rfloor = \lfloor \lambda^L_Y\circ F\chi_{\lfloor f\rfloor}\rfloor = L^{\lambda^L}(\lfloor f\rfloor) = L(\lfloor f\rfloor)
\]
where we use point (i) for the final equality. Now monotonicity of $\lambda^L$ follows immediately from monotonicity of $L$.
\item[(Lax naturality)] Note that
\[
\left(\ni_X;\gr(f)\right)\subseteq \left(\gr(Pf);\ni_{PY}\right)
\]
since if $A\ni x$, then $Pf[A]\ni f(x)$. 

Now we see that
\begin{align*}
a\in PFf\circ \lambda_X^L(\Phi) &\iff \exists a': a = Ff(a')\text{ and }a'\in \lambda_X^L(\Phi)\\
&\iff \exists a':a = Ff(a')\text{ and }a'\in \chi_{L\ni}(\Phi)\\
&\iff \exists a': a = Ff(a')\text{ and }(\Phi,a')\in L({\ni_X})\\
&\iff (\Phi,a)\in L({\ni_X});\gr(Ff)\\
&\implies (\Phi,a)\in L({\ni_X};\gr(f))\\
&\implies (\Phi,a)\in L(\gr(Pf);\ni_{PY})\\
&\iff (\Phi,a)\in \gr(FPf);L({\ni_{PY}})&\text{by lemma }\ref{lemma:cospan}\\
&\iff a\in \lambda_{PY}^L \circ FPf(\Phi)
\end{align*}
\item[(Lax Eilenberg-Moore)] First, we write out that
\[
\mu\circ P\lambda^L_Z\circ \lambda^L_{PZ} = \mu \circ P(\chi_{L\ni})\circ \chi_{L\ni} = \chi_{L\ni;L\ni}
\]
since $\chi_{-}$ turns relational composition $;$ into Kleisli composition. Next, note that
\[
\gr(\mu);\ni = \ni;\ni
\]
since
\[
\bigcup_{A\in \mathcal{A}}A \ni x \text{ if and only if }\exists A: \mathcal{A}\ni A\text{ and }A\ni x
\]
So, we conclude that
\begin{align*}
\lfloor\mu\circ P\lambda^L_Z\circ \lambda^L_{PZ}\rfloor &= L\ni;L\ni \\
&\leq L(\ni;\ni) \\
&= L(\gr(\mu);\ni)\\
&= \gr(F\mu);L\ni&\text{ by lemma }\ref{lemma:cospan}\\
&= \lfloor \eta\circ F\mu\rfloor ; \lfloor \chi_{L\ni}\rfloor\\
&\overset{*}{=} \lfloor \mu \circ P\chi_{L\ni}\circ \eta\circ F\mu\rfloor\\
&= \lfloor \chi_{L\ni}\circ F\mu\rfloor\\
&= \lfloor\lambda_{Z}^L\circ F\mu\rfloor
\end{align*}
giving the first inequality; where in the equality (*) we use that $\lfloor -\rfloor$ turns relational composition into Kleisli composition. 

For the second inequality, we simply note that $\lfloor\eta_Z\rfloor = \gr(\id_Z)$, and so
\[
\lfloor\lambda^L_Z\circ F\eta_Z\rfloor = L^{\lambda^L}\lfloor\eta_Z\rfloor = L\gr(\id_Z) \geq \gr(F\id_Z) = \gr(\id_{FZ}) = \lfloor\eta_{FZ}\rfloor
\]
\item[(Lax extensionality)] Assume that $L$ is diagonal-preserving. We aim to show that $\lambda_L$ is laxly extensional. This follows simply from
\[
\lfloor\lambda^L_Z \circ F\eta_Z\rfloor = L\gr(\id_Z) \leq \gr(\id_{FZ}) = \lfloor\eta_{FZ}\rfloor
\]
\item[(Symmetry)] If $L$ is symmetrical, we get simply
\[
\lfloor(\lambda^L_Y\circ Ff)^\flat\rfloor = (L\lfloor f\rfloor)^\circ = L(\lfloor f\rfloor^\circ) = \lfloor\lambda_X\circ F(f^\flat)\rfloor
\]
\end{description}
\item We prove each of the five conditions.
\begin{description}
\item[(2-cells)] If $S\leq R$, then
\[
L^\lambda S = \lfloor \lambda_Y\circ F\chi_S\rfloor \leq \lfloor\lambda_Y\circ F\chi_R\rfloor = L^\lambda R
\]
by monotonicity of $\lambda$.
\item[(lax functoriality)] Let $R:X\relto Y$ and $S:Y\relto Z$. We draw the following diagram:
\[
\begin{tikzcd}[column sep = large, row sep = large]
FX\arrow[r, "F(\chi_{R;S})"] \arrow[r,""{name = U, below},phantom]\dar{F\chi_{R}} & FPZ\rar{\lambda_Z} & PFZ\\
FPY\arrow[r, "FP\chi_S"{name=D}]\dar{\lambda_Y} & FPPZ\uar{F\mu_Z}\dar{\lambda_{PZ}}\arrow[r,"P\lambda_Z\circ \lambda_{PZ}"]\arrow[r,""{name = W, below}.phantom] & PPFZ\arrow[u, "\mu_{FZ}"] \arrow[ul, "\geq" marking, phantom]\\
PFY\rar{PF\chi_{S}}\arrow[ur, "\leq" marking, phantom] & PFPZ\arrow[r,"P\lambda_Z"{name = V}] & PPFZ\arrow[u, equal]
\arrow[phantom, "=" marking, from=U, to=D]
\arrow[phantom, "=" marking, from=V, to=W]
\end{tikzcd}
\]
The top left square is $F$ applied to the Kleisli composite $\chi_{R;S}$. The top right square is lax Eilenberg-Moore, and the bottom left square is lax naturality. The bottom right square is a simple equality. 

The above diagram shows that
\[
L^\lambda R;L^\lambda S = \lfloor\mu_{FZ}\circ P(\lambda_Z\circ F\chi_{S})\circ \lambda_Y\circ F\chi_{R} \rfloor \leq \lfloor\lambda_Z\circ F(\chi_{R;S}) \rfloor= L^\lambda(R;S)
\]
as desired.
\item[(lifting)] Let $f:X\to Y$ be a morphism. Then 
\begin{align*}
L^\lambda \gr(f) &= L^\lambda(\lfloor\eta_Y\circ f\rfloor)= \lfloor\lambda_Y\circ F(\eta_Y\circ f)\rfloor= \lfloor\lambda_Y\circ F\eta_Y\circ Ff\rfloor\geq \lfloor\eta_{TY}\circ Ff\rfloor = \gr(Ff)
\end{align*}
by lax Eilenberg-Moore. We also have
\begin{align*}
\gr(Ff);L^\lambda\gr^\conv (f) &= \lfloor\mu_X\circ P\lambda_X\circ PF(\chi_{\gr^\conv(f)})\circ \eta_{FX}\circ Ff\rfloor\\
&= \lfloor\lambda_X\circ F(\chi_{\gr^\conv(f)})\circ Ff\rfloor\\
&= \lfloor\lambda_X\circ F(\chi_{\gr^\conv(f)}\circ f)\rfloor\\
&\overset{*}{\geq} \lfloor\lambda_X\circ F\eta_X\rfloor\\
&\overset{**}{\geq} \lfloor\eta_{FX}\rfloor = \Delta_{FX}
\end{align*}
where in inequality (*) we use monotonicity of $\lambda$, together with the fact that $\chi_{\gr^\conv(f)}\circ f \geq \eta_X$; and inequality (**) is simply the unit part of lax Eilenberg-Moore. Since $\gr^\conv(Ff)$ is the least relation $R$ with $\gr(Ff);R \geq \Delta_X$, we obtain
\[
L^\lambda\gr^\conv(f) \geq \gr^\conv(Ff)
\]
as desired.
\item[(diagonal-preserving)] Assume that $\lambda$ is laxly extensional. Then
\[
L^\lambda\Delta_Z = \lfloor\lambda_Z\circ\eta_Z\rfloor \leq \lfloor\eta_{FZ}\rfloor = \Delta_{FZ}
\]
\item[(symmetry)] Assume that $\lambda$ is symmetric. Then it follows immediately that
\[
L^\lambda(R^\conv) = \lfloor\lambda_X\circ F(\chi_{R}^\flat)\rfloor = \lfloor(\lambda_Y\circ F\chi_R)^\flat\rfloor = \lfloor \lambda_Y\circ F\chi_R\rfloor^\conv = (L^\lambda R)^\conv
\]
\end{description}
\end{enumerate}
\end{proof}

\section{Explicit descriptions}

Since the class of $F$-liftings forms a complete lattice for each $F$, it follows that each $F$ has a minimal lifting $\tilde F$. In the case of weak-pullback preserving $F$, we know that $\tilde F = \bar F$, the Barr lifting. However, for non-weak-pullback preserving functors, giving an explicit description of the minimal lifting involves a non-trivial amount of effort. 

In this section, we will study the minimal liftings for the neighborhood functor and the monotone neighborhood functor. For the (ordinary) neighborhood functor, we moreover give a full description of the complete lattice of liftings.

\subsection{Monotone neighborhood functor}

Recall the lifting $\widetilde \M$ from example \ref{ex:monotonelift}.

\begin{theorem}\label{thm:monotonelift}
The lifting $\widetilde \M$ is the minimal lifting for the monotone neighborhood functor $\M$.
\end{theorem}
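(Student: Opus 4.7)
The plan is to prove $\widetilde{\M}(R) \subseteq L(R)$ for every $\M$-lifting $L$ and every relation $R \colon X \relto Y$; minimality then follows. Fix $(U, V) \in \widetilde{\M}(R)$, and unpack the definition with choice: for each $u \in U$ there is a forth-witness $v_u \in V$ and a function $h_u \colon v_u \to u$ satisfying $h_u(y)\, R\, y$; symmetrically, for each $v \in V$ there is a back-witness $u_v \in U$ and a function $g_v \colon u_v \to v$ with $x\, R\, g_v(x)$.

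I would then assemble these witnesses into a single span $X \xleftarrow{k} Z \xrightarrow{l} Y$, with $Z = \bigsqcup_{u \in U} v_u \sqcup \bigsqcup_{v \in V} u_v$ and $k, l$ defined piecewise from the witness data (using $h_u$ and inclusions on the first summand, and inclusions and $g_v$ on the second). By construction $\gr^\conv(k); \gr(l) \subseteq R$, so $L$ applied to this sub-relation lies inside $L(R)$ by monotonicity. Lax functoriality together with the graph and converse-graph conditions in the definition of a lifting then gives $\gr^\conv(\M k); \gr(\M l) \subseteq L(R)$. Hence it suffices to produce an element $\Phi \in \M Z$ whose pushforward $\M k(\Phi)$ equals $U$ and whose pushforward $\M l(\Phi)$ equals $V$: such a $\Phi$ places $(U, V)$ inside $\gr^\conv(\M k); \gr(\M l)$, and thus into $L(R)$.

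The hard part is the construction of $\Phi$: the forth-functions $h_u$ need not be surjective onto $u$, so the naive candidate --- the upset in $PZ$ generated by the witness subsets $v_u$ and $u_v$ viewed as lying in $Z$ --- tends to overshoot, producing pushforwards that contain spurious smaller sets such as $h_u(v_u) \subsetneq u$. The delicate task is to refine $\Phi$ so that the forth-family and back-family of witnesses jointly cancel these spurious elements, mirroring the symmetric roles that forth and back play in the definition of $\widetilde{\M}$. If this span/Barr-style argument proves too coarse, an alternative route is to pass through the correspondence of Section~3 and show directly that $\lambda^{\widetilde{\M}}$ is the minimum lax distributive law for $\M$, deriving membership in any $\lambda(\psi)$ from the axioms (lax Eilenberg--Moore unit and monotonicity) via choice functions $h \colon PZ \to Z$ that realise the transversal conditions implicit in the back clause.
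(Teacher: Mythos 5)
Your high-level strategy is the same as the paper's: exhibit $(U,V)$ as lying in $\gr^\conv(\M k);\gr(\M l)$ for a span $X \xleftarrow{k} Z \xrightarrow{l} Y$ with $\gr^\conv(k);\gr(l)\subseteq R$, then conclude by lax functoriality, the graph conditions, and monotonicity of $L$. But the entire mathematical content of the theorem is the construction of a single $\Phi\in\M Z$ with $\M k(\Phi)=U$ and $\M l(\Phi)=V$, and this is exactly the step you leave open (``the delicate task is to refine $\Phi$\dots''). That is a genuine gap, not a detail: the difficulty you correctly diagnose --- that $h_u$ need not be surjective onto $u$, so the up-closure of the choice-based witness sets makes $\M k(\Phi)$ contain sets $u'$ with $h_u[v_u]\subseteq u'$ but $u\not\subseteq u'$, hence $u'\notin U$ --- is intrinsic to your choice-based span and I do not see how to repair it within that setup. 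A second omission: your argument nowhere addresses relations that are not total and surjective, yet this matters, since the identity $k[k^{-1}(u)]=u$ (which is what kills the overshoot) fails on elements of $u$ outside the domain of $R$.

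The paper resolves both issues at once. It takes $Z$ to be the relation itself, $k=\pi_X$, $l=\pi_Y$ the projections, and $\Phi=W$ the up-closure of the \emph{full} preimages $\{\pi_X^{-1}(u)\mid u\in U\}\cup\{\pi_Y^{-1}(v)\mid v\in V\}$ rather than of choice-based transversals. For a \emph{total surjective} $R$ one then gets $\pi_X[\pi_X^{-1}(u')] = u'$ exactly, so $\pi_X^{-1}(u')\supseteq\pi_X^{-1}(u)$ forces $u'\supseteq u$ and upward-closedness of $U$ absorbs it (no spurious elements), while the cross-generators $\pi_Y^{-1}(v)$ are handled by the back clause of $\widetilde\M$ together with monotonicity of $U$; this is Lemma~\ref{lemma:totsur}. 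The general case is then reduced to the total surjective one by extending $R$ to $R_*:X\cup\{*\}\relto Y\cup\{*\}$ and transferring along the inclusions via Lemma~\ref{lemma:cospan}. If you want to salvage your write-up, replace your witness-indexed coproduct by this construction (or prove that your $\Phi$ can be corrected); as it stands, the proposal is a plan whose critical step is both unexecuted and, in the specific form you set it up, obstructed. Your alternative route through $\lambda^{\widetilde\M}$ is likewise only gestured at and would face the same combinatorial core.
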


To prove this, we first need a lemma.

\begin{lemma}\label{lemma:totsur}
Let $R:X\relto Y$ be a total surjective relation. Then $\widetilde \M R\leq LR$ for all liftings $L$.
\end{lemma}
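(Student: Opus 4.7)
The plan is to route the argument through the Barr-style expression $\bar{\M}R := \gr^\conv(\M\pi_1^R);\gr(\M\pi_2^R)$, where $\pi_1^R:R\to X$ and $\pi_2^R:R\to Y$ are the legs of the span presentation $R=\gr^\conv(\pi_1^R);\gr(\pi_2^R)$. Although $\bar{\M}$ is not itself a lifting (since $\M$ does not preserve weak pullbacks), the relation $\bar{\M}R$ is well-defined, and the same computation as in the proof of the minimality proposition above shows $\bar{\M}R \leq LR$ for every $\M$-lifting $L$ and every $R$: by lax functoriality together with the two lifting inequalities,
\[
LR \geq L(\gr^\conv(\pi_1^R));L(\gr(\pi_2^R)) \geq \gr^\conv(\M\pi_1^R);\gr(\M\pi_2^R) = \bar{\M}R.
\]
Hence the problem reduces to proving $\widetilde{\M}R \leq \bar{\M}R$ whenever $R$ is total and surjective.

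For this reduction, given $(U,V) \in \widetilde{\M}R$, I would exhibit a witness $W \in \M R$ with $\M\pi_1^R(W) = U$ and $\M\pi_2^R(W) = V$; this is exactly what places $(U,V)$ into $\bar{\M}R$. My proposed witness is
\[
W := \{w \subseteq R \mid \pi_1^R(w) \in U \text{ and } \pi_2^R(w) \in V\}.
\]
Monotonicity of $W$ is immediate from the monotonicity of $U$ and $V$. To verify $\M\pi_1^R(W) = U$, one unfolds: $u \in \M\pi_1^R(W)$ iff $(\pi_1^R)^{-1}(u) \in W$, iff $\pi_1^R((\pi_1^R)^{-1}(u)) \in U$ and $\pi_2^R((\pi_1^R)^{-1}(u)) \in V$. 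Totality of $R$ (equivalently, surjectivity of $\pi_1^R$) collapses the first conjunct to $u \in U$, and the second conjunct becomes $\{y \mid \exists x\in u,\, xRy\} \in V$, which follows from $u \in U$ because the defining clause of $\widetilde{\M}R$ supplies some $v \in V$ contained in this set, and $V$ is upward closed. The symmetric argument, using surjectivity of $R$, gives $\M\pi_2^R(W) = V$.

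The main obstacle is identifying the right witness $W$; once it is in hand, everything else is routine unfolding. The total-surjective hypothesis is used in exactly one place, namely to ensure that $\pi_1^R$ and $\pi_2^R$ are surjective so that $u$ and $v$ can be recovered from their preimages. A small sanity check is required in the degenerate case where $U$ or $V$ is empty: the $\widetilde{\M}$-condition forces the other to be empty as well, and the above $W$ is then empty, which is still a legitimate element of $\M R$ witnessing $(\emptyset,\emptyset) \in \bar{\M}R$.
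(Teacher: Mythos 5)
Your proof is correct, and its overall strategy matches the paper's: both reduce the claim to showing that every $(U,V)\in\widetilde\M R$ lies in the composite $\gr^\conv(\M\pi_1^R);\gr(\M\pi_2^R)$ (which is below $LR$ for any lifting $L$ by lax functoriality and the two graph conditions), and both do so by exhibiting an explicit witness $W\in\M R$ projecting onto $U$ and $V$. The difference is the witness itself. The paper takes the upward closure of $W_0\cup W_1$, where $W_0$ and $W_1$ are the families generated by pulling back the elements of $U$ along $\pi_1^R$ and of $V$ along $\pi_2^R$ respectively --- essentially the \emph{smallest} plausible candidate --- and then must do a two-case analysis to check that nothing extraneous was added. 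Your $W=\{w\subseteq R\mid \pi_1^R[w]\in U\text{ and }\pi_2^R[w]\in V\}$ is instead the \emph{largest} candidate, defined by a single symmetric condition; monotonicity is immediate, the verification $\M\pi_1^R(W)=U$ is a one-line unfolding in which totality kills one conjunct and the defining clause of $\widetilde\M R$ (plus upward closure of $V$) kills the other, and the second projection is literally the mirror image. Both uses of the total-surjective hypothesis and of the two clauses of $\widetilde\M$ appear in the same places in each argument, so nothing is gained in generality, but your witness makes the bookkeeping noticeably lighter and the symmetry of the argument manifest.
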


In \cite{Hansen2004}, a similar statement appears as lemma 4.7. 

\begin{proof}
Consider the two projection morphisms $\pi_X:R\to X$ and $\pi_Y:R\to Y$. Since $R$ is total and surjective, both these functions are surjective. 

We claim that $\widetilde\M R = (\M \pi_X)^\circ;\M\pi_Y$. The inequality $\geq$ follows from $R = (\pi_X)^\circ;\pi_Y$. 

For $\leq$, let $(U,V)\in \widetilde\M R$. Then we set
\begin{align*}
W_0 &:= \{\{(x,y)\in R\mid x\in u\}\mid u\in U\}\\
W_1 &:= \{\{(x,y)\in R\mid y\in V\}\mid v\in V\}\\
W &:= \{w\mid \exists w'\in W_0\cup W_1:w'\subseteq w\}
\end{align*}
We claim that $\M\pi_X(W) = U$. For this, we need to show that (1) if $u\in U$, then $\pi_X^{-1}(u)\in W$, and (2) if $\pi_X^{-1}(u)\in W$, then $u\in U$.
\begin{enumerate}[label = (\arabic*)]
\item Clearly, if $u\in U$, then $\pi_X^{-1}(u) = \{(x,y)\in R\mid x\in u\}\in W$, so $\pi_X^{-1}(u)\in W$. 
\item Assume $\pi_X^{-1}(u)\in W$. There are two cases: (i) there is a $u'\in U$ with $\{(x,y)\in R\mid x\in u'\}\subseteq \pi_X^{-1}(u)$, or (ii) there is a $v\in V$ with $\{(x,y)\in R\mid y\in v'\}\subseteq \pi_X^{-1}(u)$. 
\begin{enumerate}[label = (\roman*)]
\item In this case, we know that $\pi_X[\{(x,y)\in R\mid x\in u'\}]\subseteq \pi_X(\pi_X^{-1}(u))$. But since $R$ was total, we know that $\pi_X[\{(x,y)\in R\mid x\in u'\}] = u'$ and $\pi_X[\pi^{-1}(u)] = u$. So $u'\subseteq u$, and hence $u\in U$.
\item Clearly, $\pi_X[\{(x,y)\in R\mid y\in v\}] = \{x\mid \exists y\in v:xRy\}$. Since $(U,V)\in \widetilde\M R$, there is a $u'\in U$ such that for all $x\in u'$, there is a $y\in v$ with $xRy$. But this just says that $u'\subseteq \pi_X[\{(x,y)\in R\mid y\in v\}]$. So we conclude that there is a $u'\in U$ with 
\[u'\subseteq \pi_X[\{(x,y)\in R\mid y\in v\}] \subseteq \pi_X(\pi_X^{-1}(u)) = u\]
and hence $u\in U$.
\end{enumerate}
So in both cases, we have $u\in U$, as desired.
\end{enumerate}
The proof that $\M\pi_Y(W) = V$ is completely symmetrical; so, we can conclude that $(U,V)\in (\M \pi_X)^\circ;\M\pi_Y$. 

\vspace{10pt}

Now, let $L$ be any lifting. Then
\[
LR = L((\pi_X)^\circ;\pi_Y)\geq L(\pi_X)^\circ;L\pi_Y\geq (\M \pi_X)^\circ;\M\pi_Y = \widetilde\M R
\]
\end{proof}

With this lemma, we can prove theorem \ref{thm:monotonelift}.

\begin{proof}
Let $R:X\relto Y$ be any relation. Let $X'$ be the domain of $R$ and $Y'$ the range of $R$. Then we define $X_* = X\cup\{*\}, Y_* = Y\cup \{*\}$ and
\[
R_* = R\cup \{(x,*)\mid x\in X\setminus X'\}\cup \{(*,y)\mid y\in Y\setminus Y'\}\cup \{(*,*)\}
\]
Then $R_*:X_*\relto Y_*$ is total and surjective. 

\vspace{10pt}

Let $\iota_X:X\to X_*$ and $\iota_Y:Y\to Y_*$ be the natural inclusion functions. First, we note that
$
R = \iota_X;R_*;(\iota_Y)^\circ
$
The inequality $\leq$ is clear, since $R\subseteq R_*$. For $\geq$, notice that $*$ is not in the range of either $\iota_X$ or $\iota_Y$. 

Now by lemma \ref{lemma:cospan}, we know that for any lifting $L$,
\[
LR = (\M\iota_X);LR_*;(\M\iota_Y)^\circ.
\]
So we can calculate that
\begin{align*}
LR&= \M\iota_X;LR_*;(\M\iota_Y)^\circ\\
&\geq \M\iota_X;\widetilde\M R_*;(\M\iota_Y)^\circ\qquad \text{ by lemma \ref{lemma:totsur}}\\
&= \widetilde\M R
\end{align*}
We conclude that $\widetilde\M$ is minimal. 
\end{proof}

\subsection{The neighborhood functor}

We introduce an extremely minimal logic for neighborhood systems. This will consist of the following expressions:
\begin{align*}
\rho_0 &::= \square \bot\mid\neg\square\bot\\
\rho_1 &::= \square\top \mid \neg \square \top\\
\rho &::= (\rho_0, \rho_1)
\end{align*}
Given $(U,V)\in \N X \times \N Y$, satisfaction $(U,V)\Vdash \rho$ is defined as follows:
\begin{align*}
(U,V)&\Vdash \square \bot\text{ iff } \varnothing \in U\implies \varnothing\in V\\
(U,V)&\Vdash \neg\square\bot \text{ iff }\varnothing \notin U \implies \varnothing\notin V\\
(U,V)&\Vdash \square\top \text{ iff }X\in U\implies Y\in V\\
(U,V)&\Vdash \neg\square\top \text{ iff }X\notin U\implies Y\notin V\\
(U,V)&\Vdash (\rho_0, \rho_1) \text{ iff }(U,V)\Vdash \rho_0\text{ and }(U,V)\Vdash \rho_1
\end{align*}
Now let $I$ be the set of all $\rho$'s. For each $J\subseteq I$, we get a lifting $L_J$ defined on a relation $R:X\relto Y$ as
\[
L_J(R) := \{(U,V)\in \N X\times \N Y\mid (U,V)\Vdash \rho\text{ for all }\rho\in J\}
\]
Since these liftings do not depend on the chosen relation, we will omit $R$, writing simply $L_J: \N X \relto \N Y$. Note also that $J\supseteq J'$ if and only if $L_J\leq L_{J'}$.

\begin{theorem}
The lattice $(P(I),\supseteq)$ is isomorphic to $(\Lift(\N), \leq)$ via $J\mapsto L_J$. 
\end{theorem}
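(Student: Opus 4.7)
The plan is to establish $J \mapsto L_J$ as an order-reversing bijection from $(P(I), \supseteq)$ to $(\Lift(\N), \leq)$. I would split the work into four subgoals: showing that each $L_J$ is an $\N$-lifting, that the map is order-reversing, that it is injective, and that it is surjective.

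For the first subgoal, the five lifting conditions must be verified. The 2-cell condition is trivial because $L_J R$ does not depend on $R$. Lax functoriality holds because each $\rho \in I$ is a Horn implication on the boundary data of $(U,V)$, and such implications compose transitively (e.g., $\varnothing \in U \Rightarrow \varnothing \in V$ together with $\varnothing \in V \Rightarrow \varnothing \in W$ gives $\varnothing \in U \Rightarrow \varnothing \in W$). The graph condition $\gr(\N f) \leq L_J\gr(f)$ rests on the observation that $\N f$ preserves boundary data: since $f^{-1}(\varnothing) = \varnothing$ and $f^{-1}(Y) = X$, we have $\varnothing \in U \Leftrightarrow \varnothing \in \N f(U)$ and $X \in U \Leftrightarrow Y \in \N f(U)$. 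The converse-graph condition is similar.

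For order-reversal, $J \supseteq J'$ imposes more constraints on admissible $(U,V)$, so $L_J R \subseteq L_{J'} R$, giving $L_J \leq L_{J'}$. For injectivity, one exhibits for each $\rho \in I$ a pair $(U,V)$ satisfying the other three formulas but failing $\rho$; for example, $(\{\varnothing\}, \varnothing)$ fails $\square\bot$ while vacuously satisfying the other three (for $X,Y$ nonempty), and analogous pairs handle the remaining $\rho$. Such distinguishing pairs imply directly that distinct $J$'s yield distinct $L_J$'s.

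The main obstacle is surjectivity. Given any lifting $L$, define $J(L) := \{\rho \in I : \forall X,Y, R, (U,V) \in LR, (U,V) \Vdash \rho\}$. Then $L \leq L_{J(L)}$ is immediate from the definition. The hard direction is $L_{J(L)} \leq L$: given $(U,V)$ satisfying every $\rho \in J(L)$, one must produce $(U,V) \in LR$. The strategy is witness transport via lemma \ref{lemma:cospan}. For each $\rho \notin J(L)$, there exists by definition a witness $(U_\rho, V_\rho) \in L(R_\rho)$ failing $\rho$, and such witnesses can be transported along cospans $X' \to X$, $Y' \to Y$ so that the induced maps $\N f, \N g$ send them to $(U,V)$, with the transported relation fitting inside $R$. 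I expect the cleanest execution to reduce to a base case such as $X = Y = \{*\}$: compute the possible values of $L\top_{1,1}$ as preorders on the 4-element set $\N\{*\} = \{\varnothing, \{\varnothing\}, \{\{*\}\}, \{\varnothing, \{*\}\}\}$ and verify there are exactly 16 such preorders, corresponding precisely to the 16 subsets of $I$, then propagate the classification back to arbitrary $X, Y, R$ using lemma \ref{lemma:cospan} together with the sandwich $L\emptyset_{X,Y} \leq LR \leq L\top_{X,Y}$. The hardest step is showing that the 4 Horn implications are exhaustive, i.e., no lifting can impose a genuinely new constraint beyond these.
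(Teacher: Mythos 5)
Your skeleton (well-definedness, order-reversal, injectivity, surjectivity with $J(L) := \{\rho : LR \Vdash \rho \text{ always}\}$ as the candidate preimage) matches the paper's, and you correctly identify the crux: showing $L_{J(L)} \leq L$, i.e.\ that no lifting can impose constraints beyond the four boundary implications. But that crux is precisely where your proposal stops being a proof. Two concrete problems. First, your reduction to $X=Y=\{*\}$ only controls $L\top_{X,Y}$ (via $\top_{X,Y} = \gr(!_X);\Delta_1;\gr^\conv(!_Y)$ and Lemma~\ref{lemma:cospan}), and even there the claim that ``there are exactly 16 such preorders'' on the 4-element set $\N\{*\}$ is unjustified: a preorder containing the diagonal is all that reflexivity plus lax functoriality give you, and there are far more than 16 preorders on a 4-element set (e.g.\ one relating $\varnothing$ to $\{\varnothing,\{*\}\}$ but nothing else off-diagonal). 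Ruling these out already requires the global interaction of $L$ with maps between larger sets --- which is the missing argument, not a routine verification. Second, the sandwich $L0_{X,Y} \leq LR \leq L\top_{X,Y}$ points the wrong way: what you need is the lower bound $L0_{X,Y} \geq L_{J(L)}$, and nothing in your outline produces it.

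The paper supplies exactly these two missing ingredients. (a) It shows $L0_X \supseteq L_I$ for every lifting $L$ by an explicit ``zigzag'': factoring the empty relation as $\gr(a);\gr^\conv(b)$ through constant maps $a,b : X \to 2$, and, in the cases where $\N a(U) \neq \N b(V)$, inserting an auxiliary neighbourhood system on $X$ and a map $f : X \to 2$ separating one point, so that $\gr(a);\gr^\conv(f);\gr(b) = \gr(b)$. This is a genuine construction, not transport of witnesses along a cospan. (b) It then runs a descent: if $L > L_J$, a witness $(U,V) \in LR$ failing some $\rho_0 \in J$ is combined with Lemma~\ref{lemma:nbdlogic} (any two pairs failing the same $\rho$ are related by \emph{every} $\rho'$) and the composite $L_I ; LR ; L_I \subseteq L(0;R;0) = L0$ to conclude $L \geq L_{J\setminus\{\rho_0\}}$; finite iteration from $J = I$ gives surjectivity. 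Your ``witness transport via Lemma~\ref{lemma:cospan}'' gestures at step (b) but omits the essential use of Lemma~\ref{lemma:nbdlogic} and relational composition through $L_I$, and omits step (a) entirely. As written, the proposal identifies the hard step and then defers it; it does not close the gap.
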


To prove this theorem, we will need the following lemma:
\begin{lemma}\label{lemma:nbdlogic}
Let $(U,V)\in \N X \times \N Y$ and $(U', V')\in \N X'\times \N Y'$. Assume that for some $\rho$, we have $(U,V)\nVdash \rho$ and $(U',V')\nVdash \rho$. Then for each $\rho'$, we have
\[
(U,U')\Vdash \rho', \quad (V,V')\Vdash \rho'
\]
\end{lemma}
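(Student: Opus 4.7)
The plan is to do a case analysis on the common failure, unpacking what the hypothesis combinatorially forces on the truth values $\varnothing \in U,\; X \in U,\; \varnothing \in V,\; Y \in V$ and their primed versions, and then translating those constraints into satisfaction of every $\rho'$ by the cross-pairs $(U,U')$ and $(V,V')$.

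First, I would record the atomic content of failure. Unpacking the semantic clauses, $(U,V) \nVdash \square\bot$ is equivalent to $\varnothing \in U$ together with $\varnothing \notin V$, and analogously for the three other atomic formulas, so each atomic failure pins down exactly two of the eight bits. In particular $(U,V)$ cannot fail both of $\square\bot,\neg\square\bot$ nor both of $\square\top,\neg\square\top$, so its ``failure pattern'' contains at most one atomic formula from each family. Since $(U,V) \nVdash (\rho_0,\rho_1)$ iff $(U,V)$ fails $\rho_0$ or $\rho_1$, the hypothesis splits into subcases according to which atomic subformula is the source of failure for each pair. In each subcase the 2-bit constraints align the relevant bits across the two pairs: for instance, if both $(U,V)$ and $(U',V')$ fail $\square\bot$, then $\varnothing \in U$ and $\varnothing \in U'$, while $\varnothing \notin V$ and $\varnothing \notin V'$, and these alignments make both implications defining $\square\bot$ and $\neg\square\bot$ hold for $(U,U')$ and for $(V,V')$.

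Once the alignment is established in a subcase, satisfaction of every $\rho'=(\rho_0',\rho_1')$ by $(U,U')$ and by $(V,V')$ should reduce to a direct inspection of the implications defining $\Vdash$. The main obstacle will be organizing the casework cleanly, since a common failure of $\rho=(\rho_0,\rho_1)$ can arise from any of four combinations of which atomic subformula each pair fails, multiplied by the four possible choices of $\rho$. I would therefore aim for a uniform reformulation in which a common failure is shown to force a single ``alignment pattern'' between $(U,V)$ and $(U',V')$; with that alignment in hand, each of the four $\rho'$ on both cross-pairs is dispatched by a one-line implication-chase, so the combinatorial bookkeeping collapses to a single lemma rather than a $4\times 4$ case table.
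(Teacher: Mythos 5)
Your reading of the failure condition is the literal one from the displayed semantics ($(U,V)\Vdash(\rho_0,\rho_1)$ iff both conjuncts hold, so $\nVdash$ means at least one component fails), but under that reading your argument cannot close --- and in fact the lemma itself would be false. The gap is the claim that each subcase's ``alignment'' suffices for \emph{every} $\rho'$. If, say, both $(U,V)$ and $(U',V')$ fail only the $\square\bot$ component of $\rho$, you learn $\varnothing\in U$, $\varnothing\in U'$, $\varnothing\notin V$, $\varnothing\notin V'$, which settles $\square\bot$ and $\neg\square\bot$ for the cross-pairs --- exactly the two formulas you check --- but says nothing about whether $X\in U$ or $X'\in U'$. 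Concretely, $U=\{\varnothing,X\}$, $U'=\{\varnothing\}$, $V=V'=\varnothing$ gives a common failure of $(\square\bot,\square\top)$ (both pairs failing the \emph{same} component, your best-case alignment) with $(U,U')\nVdash\square\top$. So no single alignment pattern forcing all four atomic formulas can be extracted from a one-component failure, and the mixed subcases (one pair failing $\rho_0$, the other $\rho_1$) are worse still.

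The paper's proof instead extracts failure of \emph{both} components from $(U,V)\nVdash(\square\bot,\square\top)$: it asserts $\varnothing\in U$, $\varnothing\notin V$ \emph{and} $X\in U$, $Y\notin V$, and likewise for the primed pair. That pins down all eight membership bits; $U$ and $U'$ then agree on $\varnothing$ and on the whole set, so every $\rho'$ holds for $(U,U')$ and $(V,V')$ by a one-line inspection, with no case split on which component failed. This only makes sense if non-satisfaction of a pair is conjunctive, i.e.\ if the clause for $(U,V)\Vdash(\rho_0,\rho_1)$ is meant disjunctively (the displayed ``and'' appears to be a slip; the injectivity of $J\mapsto L_J$ claimed in the main theorem also requires this reading, since under the conjunctive reading distinct $J$ can yield the same lifting). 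The missing ingredient in your proposal is therefore not better bookkeeping but the stronger hypothesis: you must obtain failure of both atomic components for both pairs. Once you have that, your implication-chase does finish the proof.
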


\begin{proof}
WLOG, we can assume that $\rho = (\square \bot, \square \top)$; all other cases are similar.

Then since $(U,V)\nVdash \rho$, we know $\varnothing\in U, \varnothing\notin V$ and $X\in U, Y\notin V$. Similarly, we know $\varnothing\in U', \varnothing\notin V$ and $X'\in U', Y'\notin V'$. But from these data, it follows immediately that for all $\rho'$, we must have
\[
(U, U')\Vdash \rho'
\]
since $U$ and $U'$ agree on $\varnothing$ and the entire set. And of course the same holds for $(V, V')$. 

\end{proof}

Now we can start the full proof.

\begin{proof}
First, we show that each $L_J$ is a lifting. Since clearly $L_J = \bigwedge_{\rho\in J}L_{\{\rho\}}$, it suffices to show that each $L_{\{\rho\}}$ is a lifting.

They are clearly monotonic, since they do not depend on the input $R$. They are also clearly laxly functorial. Finally, if $f:X\to Y$ is a function, then for all $U\in \N X$ and all $\rho\in I$, we have
\[
(U, (\N f)U)\Vdash \rho
\]
since
\[
(\N f)U\ni \varnothing \text{ iff } U\ni f^{-1}(\varnothing)\text{ iff }U\ni \varnothing\text{ and }
(\N f)\ni X \text{ iff }U\ni f^{-1}(X) \text{ iff }U\ni Y
\]
So indeed, each $L_J$ extends the graph of $\N f$. 

\vspace{10pt}

This shows that the map $J\mapsto L_J$ is well-defined. It is clearly injective and meet-preserving (recall that the meet in $(P(I), \supseteq)$ is given by union), so it remains to show that it is surjective. We will proceed in three steps:
\begin{enumerate}
\item The top element is preserved by $J\mapsto L_J$;
\item The bottom element is preserved by $J\mapsto L_J$;
\item If $L > L_J$, then there is some $J' \subsetneq J$ with $ L \geq L_{J'}$. 
\end{enumerate} 
These three steps together imply that $J\mapsto L_J$ is surjective, from which it then follows that it is an isomorphism.

\vspace{10pt}

For point 1: The top element of $(P(I), \supseteq)$ is $\varnothing$, and indeed $L_{\varnothing}(R:X\relto Y) = X\times Y$. 

For point 2: Let $L$ be any symmetric lifting for $\N$. For a given $X$, write $0_X:X\relto X$ for the empty relation. We will show that $(U,V)\in L0_X$ if $U$ and $V$ agree on $\varnothing$ and $X$. 

\vspace{10pt}

We first assume that $X$ contains some point $x_0$. Write $2 = \{a,b\}$ for the generic two-element set; by abuse of notation, we may also consider $a,b:X\to 2$ and $x_0:X\to X$ as constant maps.

Let $U\in \N X$ be a neighborhood system. There are four cases:
\begin{enumerate}[label = (\roman*)]
\item $\varnothing\notin U, X\notin U$. Then we see that
\[
\N a(U) = \varnothing = \N b(U)
\]
since for constant maps $c:X\to Y$, we have $c^{-1}(A) = \varnothing$ or $c^{-1}(A) = X$ for all $A$. We also clearly have $\N b(\varnothing) = \varnothing$. So, we have
\[
(U, \varnothing)\in \gr(\N a), (\varnothing, U) \in \gr^\conv(\N b)
\]
for all $U$ omitting $\varnothing$ and $X$. Now we have if $U, V$ both omit $\varnothing$ and $X$, then
\[
U (L\gr(a)) \varnothing (L\gr^\conv(b)) V
\]
and hence
\[
(U, V)\in L\gr(a);L\gr^\conv(b) \subseteq L0_X
\]
\item $\varnothing \notin U, X\in U$. Then $\N a(U) = \{\varnothing, \{b\}\}$. Take $f:X \to 2$ given by
\[
f(x) = \begin{cases} b & x \neq x_0\\a & x = x_0\end{cases}
\]
Let $V = \{\varnothing, X\setminus\{x_0\}\}$. Then it is easily seen that $\N f(V) = \{\varnothing, \{b\}\}$. Finally, we have $\N a(V) = \N b(V) = \{\varnothing\}$, again by the remarks on inverse images along constant maps. 

Now we have a `zigzag' as in figure \ref{fig:a}.
\begin{figure}
\begin{subfigure}{0.5\textwidth}
\begin{tikzpicture}[shorten <= 15pt, shorten >=15pt]
\node at (0,2) {$U : X$};
\node at (2,0) {$\{\varnothing, \{b\}\} : 2$};
\node at (4,2) {$\{\varnothing, X\setminus\{a\}\} : X$};
\node at (6,0) {$\{\varnothing\} : 2$};

\draw[->] (0,2) -- node[anchor = west] {$a$} (2,0);
\draw[->] (4,2) -- node[anchor = west] {$f$} (2,0);
\draw[->] (4,2) -- node[anchor = west] {$a$ or $b$} (6,0);
\end{tikzpicture}
\caption{\label{fig:a}}
\end{subfigure}
\begin{subfigure}{0.5\textwidth}
\begin{tikzpicture}[shorten <= 15pt, shorten >=15pt]
\node at (0,2) {$U : X$};
\node at (2,0) {$\{\{a\}, \{a,b\}\} : 2$};
\node at (4,2) {$\{\{a\}, X\} : X$};
\node at (6,0) {$\{\{a,b\}\} : 2$};

\draw[->] (0,2) -- node[anchor = west] {$a$} (2,0);
\draw[->] (4,2) -- node[anchor = west] {$f$} (2,0);
\draw[->] (4,2) -- node[anchor = west] {$a$ or $b$} (6,0);
\end{tikzpicture}
\caption{\label{fig:b}}
\end{subfigure}
\end{figure}
By tracing the definitions, we can see that $\gr(a);\gr^\conv(f) = \gr(x_0)$, and
\[
\gr(a);\gr^\conv(f);\gr(a) = \gr(x_0);\gr(a) = \gr(a)
\]
and similarly $\gr(a);\gr^\conv(f);\gr(b) = \gr(b)$. We now have that if $U$ is such that $\varnothing\in U, X\notin U$, then
\[
(U, \{\varnothing\})\in L(\gr(a)),\text{ and }(\{\varnothing\}, U)\in L(\gr^\conv(b))
\]
But now for all $U, V$ which both contain $\varnothing$ and both omit $X$, we have
\[
(U,V) \in L\gr(a);L(\gr^\conv(b))\subseteq L(\gr(a);\gr^\conv(b)) = L0_X
\]
\item Let $U$ be such that $\varnothing\notin U, X\in U$. Then $\N a(U) = \{\{a\}, \{a,b\}\}$. Take $V = \{\{x_0\}, X\}$. Then with $f$ as in point (ii), we have $\N f(V) = \{\{a\}, \{a,b\}\}$. Now for the constant maps $a, b$, we have $\N a(V) = \{\{a,b\}\} = \N b(V)$. Hence, we obtain a similar zigzag as in point (ii), as can be seen in figure \ref{fig:b}. From here, the argument is completely the same as in (ii): for $U, V$ both omitting $\varnothing$ and both including $X$, we get
\[
(U,\{\{a,b\}\})\in L\gr(a),\qquad  (\{\{a,b\}\},V)\in L\gr^\conv(b)
\]
showing that
\[
(U,V)\in L(\gr(a);(\gr(b))^\conv) = L0_X
\]
\item $\varnothing \in U, X\in U$. Then $\N a(U) = P2 = \N b(U)$, and so as in (i) we get for all $U, V$ both including $\varnothing$ and $X$ that
\[
(U, P2)\in L\gr(a),\qquad (P2, V)\in L\gr^\conv(b)
\]
and hence
\[
(U, V) \in L(\gr(a);\gr^\conv(b)) = L0_X
\]
\end{enumerate}

Now we have that if $X$ is nonempty, then $L0_X \supseteq L_I$. But of course, if $X$ and $Y$ are arbitrary, then the empty relation $0_{XY}:X\relto Y$ factors through $0_{X+Y}$ via the inclusions $\iota_X:X\to X+Y, \iota_Y:Y\to X+Y$. From this, it follows easily that $L0_{XY}\supseteq L_I$. But now, for $R:X\relto Y$ an arbitrary relation, we have that
\[
LR\supseteq L0_{XY}\supseteq L_I
\]
showing that $L_I$ is minimal indeed.

\vspace{10pt}

For point 3: Let $L$ be any lifting, and $J\subseteq I$ with $L > L_J$. Then there is some relation $R:X\relto Y$ and some neighborhood systems $(U,V)\in \N X\times \N Y$ with $(U,V)\in LR$ and $(U,V)\nVdash \rho_0$ for some $\rho_0\in J$. 

We claim that now for $J' = J\setminus \{\rho_0\}$, we have $L\geq L_{J'}$. Again, we will show that $L0_{X'Y'} \geq L_{J'}$ for all $X', Y'$.

Now let $(U',V')\in L_{J'}$. There are two cases:
\begin{enumerate}[label = (\roman*)]
\item $(U',V')\Vdash \rho_0$. Then $(U', V')\in L_J < L$, so $(U', V')\in L0_{X'Y'}$.
\item $(U',V') \nVdash \rho_0$. Since $(U,V)\nVdash \rho_0$ we know by lemma \ref{lemma:nbdlogic} that
\[
(U', U)\in L_I,\quad (V,V')\in L_I
\]
and hence
\begin{align*}
(U', V')&\in L_I;LR;L_I\subseteq L0_{X'X};LR;L_{YY'}\subseteq L(0_{X'X};R;0_{YY'})= L0_{X'Y'}
\end{align*}
So indeed, $L0_{X'Y'}\geq L_{J'}$ and hence for arbitrary relations $R':X'\relto Y'$ we have
\[
LR\geq L0_{X'Y'}\geq L_{J'}
\]
as desired.
\end{enumerate}

\end{proof}

\section{Conclusion and further research}

We have shown that for a fixed functor, the lax liftings form a complete lattice. In particular, any functor admits a minimal, ``maximally expressive" lifting. We have shown that for weak pullback-preserving functors, the least functor coincides with the Barr lifting. 

We have defined lax distributive laws, and shown that there is an isomorphism between the lattice of lax liftings, and the lattice of lax distributive laws. We also characterized those distributive laws that correspond to liftings that are symmetric and 
diagonal-preserving. 

We studied the monotone and ordinary neighbourhood functors in more detail. For the monotone neighbourhood functor, we have shown that the known lifting $\widetilde \M$ is minimal. For the ordinary neighbourhood functor, we have explicitly described all 16 liftings. This question was still open in \cite{Schoen2021}.

The results in this paper are specific to the categories of $\Sets$ and 2-valued relations. Other kinds of liftings have been considered. For instance, in \cite{Wild2020}, liftings of fuzzy relations are defined. A natural direction of further research is to investigate if the results from this paper could be extended to cover a wider range of many-valued relations. More generally still, one can see $\Sets$ as the category of functions inside the allegory $\Rel$. A possible approach would be to study liftings in the setting of arbitrary (power) allegories. 

\newpage

\bibliographystyle{eptcs}
\bibliography{laxliftingreferences}

\appendix

\section{Additional proofs}

\begin{proof}[Proof of theorem \ref{thm:meets}]
We show that $\Lift(F)$ has all meets. Let $\{L_i\mid i\in I\}$ be any collection of $F$-liftings. For a given $R:X\relto Y$, set
\[
LR = \bigcap_{i\in I} L_iR
\]
We show that $L$ is again a lifting, by showing it satisfies conditions 1, 2 and 3.
\begin{enumerate}[label = (\arabic*)]
\item If $R\leq S$, then 
\[
LR = \bigcap_{i\in I}L_iR\leq \bigcap_{i\in I}L_iS = LS
\]
\item If $R:X\relto Y$ and $S:Y\relto Z$ are relations, then
\begin{align*}
LR;LS &= \left(\bigcap_{i\in I}L_iR\right);\left(\bigcap_{i\in I}L_iS\right)\\
&\leq \bigcap_{i\in I} \bigcap_{j\in I}L_iR;L_jS\\
&\leq \bigcap_{i\in I}L_iR;L_iS\\
&\leq \bigcap_{i\in I}L_i(R;S)\\
&= L(R;S)
\end{align*}
\item If $f:X\to Y$ is a function, then
\[
L\gr(f) = \bigcap_{i\in I}L_i\gr(f) \geq \bigcap_{i\in I}\gr(Ff) = \gr(Ff).
\]
The other inequality is similar. 
\end{enumerate}
So $L$ is a lifting, and is clearly the greatest lower bound for the $L_i$. 

\end{proof}

\begin{proof}[Proof of theorem \ref{thm:transform}]
\begin{enumerate}[label = (\roman*)]
\item We check the three conditions.
\begin{description}
\item[(2-cells)] If $R\leq R'$, then
\[
\eta^*L(R) = (\eta \times\eta)^{-1}(LR) \leq ( \eta \times \eta)^{-1}(LR') = \eta^*L(R')
\]
since for any function $f$, we know that $f^{-1}$ preserves inclusions. 
\item[(lax functoriality)] If $R:X\relto Y$ and $S:Y\relto Z$, we have
\begin{align*}
\eta^*L(R;S) &= \{(x,z)\mid (\eta(x),\eta(z))\in L(R;S)\}\\
&\geq \{(x,z)\mid \eta(x,z)\in LR;LS\}\\
&\geq \{(x,z)\mid \exists y\in Y:(\eta(x),\eta(y))\in LR, (\eta(y),\eta(z))\in LS\}\\
&= \eta^*L(R);\eta^*L(S)
\end{align*}
\item[(lifting)] Let $f:X\to Y$ be a function. Naturality of $\eta$ states that $\gr(Ff);\gr(\eta) = \gr(\eta);\gr(Gf)$. From this, it follows that $\gr(Ff) \leq \gr(\eta);\gr(Gf);\gr^\conv(\eta)$. Hence, we have
\[
\gr(Ff) \leq \gr(\eta);\gr(Gf);\gr^\conv(\eta) \leq \gr(\eta);L\gr(f);\gr^\conv(\eta) = \eta^*L(\gr(f))
\]
and
\[
\gr^\conv(Ff) \leq \gr(\eta);\gr^\conv(Gf);\gr^\conv(\eta) \leq \gr(\eta);L(\gr^\conv(f));\gr^\conv(\eta) = \eta^*L(\gr^\conv(f))
\]
\end{description}
\item For meets, we have
\[
\eta^*(\bigwedge_i L_i)(R) =(\eta\times\eta)^{-1}(\bigcap_i (L_iR)) = \bigcap_i(\eta\times\eta)^{-1}(L_iR) = \left(\bigwedge_iL_i\right)(R)
\]
since meets are preserved by inverse images. For $\twiddle{(-)}$, we have
\begin{align*}
\eta^*(\twiddle L)(R) &= (\eta\times\eta)^{-1}(\twiddle LR)\\
&= (\eta\times\eta)^{-1}((L(R^\conv))^\conv)\\
&= ((\eta\times\eta)^{-1}(L(R^\conv)))^\conv\\
&= (\eta^*L(R^\conv))^\conv\\
&= \twiddle{(\eta^*L)}(R)
\end{align*}
\item This follows directly from preservation of $\twiddle{(-)}$: we have
\begin{align*}
L\text{ is symmetric }&\iff L = \twiddle L\\
&\implies \eta^*L = \eta^*(\twiddle L)\\
&\iff \eta^*L = \twiddle(\eta^*L)\\
&\iff \eta^*L \text{ is symmetric}
\end{align*}
\item Assume $\eta$ is everywhere injective, and $L$ preserves diagonals. Then let $X$ be arbitrary. For all $(x,y)\in FX \times FX$, we have
\begin{align*}
(x,y)\in \eta^*L\Delta_X&\iff(\eta(x),\eta(y))\in L\Delta_X\\
&\implies \eta(x) = \eta(y) & \text{ since }L\text{ preserves diagonals}\\
&\implies x = y & \text{ since }\eta\text{ is injective}
\end{align*}
and hence $\eta^*L$ preserves diagonals. 
\end{enumerate}
\end{proof}

\end{document}